\numberwithin{equation}{section} 
\numberwithin{figure}{section} 
\theoremstyle{plain}
\newtheorem*{thm*}{Theorem}
\theoremstyle{plain}
\newtheorem{thm}{Theorem}[section]
\theoremstyle{definition}
\theoremstyle{plain}
\newtheorem{lem}[thm]{Lemma}
\theoremstyle{plain}
\theoremstyle{plain}
\theoremstyle{remark}
\theoremstyle{remark}
\newtheorem*{acknowledgement*}{Acknowledgement}
\begin{document}

\title[Finsler metrizabilities and geodesic invariance]{Finsler metrizabilities and geodesic invariance}

\author[Bucataru]{Ioan Bucataru}
\address{Faculty of  Mathematics \\ Alexandru Ioan Cuza University \\ Ia\c si, 
  Romania}
\email{bucataru@uaic.ro}
\urladdr{http://orcid.org/0000-0002-8506-7567}

\author[Constantinescu]{Oana Constantinescu}
\address{Faculty of  Mathematics \\ Alexandru Ioan Cuza University \\ Ia\c si, 
  Romania}
\email{oanacon@uaic.ro}
\urladdr{http://orcid.org/0000-0003-2687-2029}

\date{\today}
\begin{abstract}
We prove that various Finsler metrizability problems for sprays can be reformulated in terms of the geodesic invariance of two tensors (metric and angular). We show that gyroscopic sprays is the the largest class of sprays with geodesic invariant angular metric. Scalar functions associated to these geodesically invariant tensors will be invariant as well and therefore will provide first integrals for the given spray.
\end{abstract}

\subjclass[2000]{53C60, 53B40}

\keywords{Finsler metrizability, projective metrizability, gyroscopic metrizability, geodesic invariance, first integrals}

\maketitle

\section{Introduction and main results.}
A symmetric linear connection $\mathcal{D}$ represents the Levi-Civita connection of some Riemannian metric $g$ if and only if the metric tensor is covariant constant with respect to the given connection, $\mathcal{D}g=0$. The problem of deciding if for a given connection there is a covariant constant Riemannian metric is known as the metrizability problem, it has been studied by many authors with various formulations for the integrability conditions of the problem, \cite{Kowalski88, MVV08, Schmidt73, TK12}. In Finsler geometry, the metrizability problem for a linear connection on the total space of a vector bundle has been studied in \cite[Theorem 4.2]{Anastasiei03}. The more difficult problem, known as the projective metrizability problem, requires to decide whether for a given connection, its projective class contains the Levi-Civita connection of some Riemannian metric. This problem has been studied for a very long time \cite{Berwald36, MVV08} and it has been solved recently only in the two-dimensional case, \cite{BDE09}.   

The problem of deciding if a given spray $S$ (a system of second order homogeneous ordinary differential equations) represents the geodesic spray (the geodesic equations) of some Finsler metric is known as the Finsler metrizability problem,  and it has been intensively investigated in the last decades, \cite{BD09, GM00, KS85, SV02}. The more general problem, known as the projective metrizability problem, consists in deciding if the geodesics of a given spray represent the geodesics of a Finsler metric, up to an orientation preserving reparametrisation, \cite{BC20, SV02, KS85}. The case when the spray is flat (has rectilinear geodesics)  has been studied first by Berwald in \cite{Berwald41}. These problems are part of the more general problem, the inverse problem of Lagrangian mechanics, which requires to decide whether or not a given system of second order ordinary differential equations (semispray) represents the variational equations (geodesic semispray) of some Lagrangian function, \cite{BD09, GM00}.    

A rigorous formulation of the metrizability problem requires a geometric setting with corresponding covariant derivatives. In Finsler geometry, there is not a canonical connection associated to a given spray or a Finsler metric, \cite{Grifone72, GM00, Shen01, SLK14}. However, all of these connections have the same covariant derivative along the geodesic spray, which is called the dynamical covariant derivative, \cite{BD09}. For a given spray $S$, the induced geometric framework uniquely determines the dynamical covariant derivative $\nabla$, \cite{BCD11}. 

In this work, we will consider $M$ a connected, $n$-dimensional, smooth manifold, where local coordinates will be denoted by $x^i$. We consider the tangent bundle $TM$ and the slit tangent bundle $T_0M=TM\setminus \{0\}$, where local coordinates will be denoted by $(x^i, y^i)$. A Finsler structure is given by a continuous and positive function $F: TM \to \mathbb{R}$, positively homogeneous in the fibre coordinates, smooth on $T_0M$, such that the metric tensor 
\begin{eqnarray}
g_{ij}(x,y)=\dfrac{1}{2}\dfrac{\partial^2F^2}{\partial y^i \partial y^j}(x,y) \label{gij}
\end{eqnarray}  
is non-degenerate on $T_0M$.

A spray $S\in \mathfrak{X}(T_0M)$ is a second order vector field, homogeneous of order $2$. For a given Finsler metric $F$, there is a unique spray $S$, called the geodesic spray, that satisfies the Euler-Lagrange equations
\begin{eqnarray}
S\left(\dfrac{\partial F^2}{\partial y^i}\right) - \dfrac{\partial F^2}{\partial x^i}=0. \label{elf2}
\end{eqnarray}

Similarly, with the corresponding result in Riemannian geometry, we will prove the following result.
\begin{thm}  \label{thm:fm}
A spray $S$ is the geodesic spray of some Finsler structure $F$ if and only if its metric tensor $g_{ij}$ is covariant constant with respect to the induced dynamical covariant derivative:
\begin{eqnarray}
\nabla g_{ij}=0. \label{ngij}
\end{eqnarray}
\end{thm}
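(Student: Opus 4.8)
The plan is to prove the two implications separately: the forward one by directly differentiating the Euler--Lagrange equations \eqref{elf2}, and the converse by a uniqueness argument which mirrors the proof of the fundamental theorem of Riemannian geometry, with positive homogeneity playing the role that absence of torsion plays there. Throughout I will write $S = y^i\frac{\partial}{\partial x^i} - 2G^i\frac{\partial}{\partial y^i}$, so that the induced nonlinear connection has coefficients $N^i_j = \partial G^i/\partial y^j$ with $y^jN^i_j = 2G^i$, the dynamical covariant derivative acts on the metric tensor by $\nabla g_{ij} = S(g_{ij}) - N^k_i g_{kj} - N^k_j g_{ik}$, and I will use the Cartan tensor $C_{ijk} = \tfrac12\,\partial g_{ij}/\partial y^k$, which is totally symmetric and satisfies $C_{ijk}y^k = 0$ since $g_{ij}$ is positively $0$-homogeneous; I also record $\partial F^2/\partial y^i = 2g_{ij}y^j$ from Euler's theorem.

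For the forward implication I would assume $S$ is the geodesic spray of $F$ and differentiate \eqref{elf2} with respect to $y^j$. Using $[\partial/\partial y^j, S] = \partial/\partial x^j - 2N^k_j\,\partial/\partial y^k$ and $\partial^2F^2/\partial y^i\partial y^j = 2g_{ij}$ this gives
\[
2\,S(g_{ij}) + \frac{\partial^2 F^2}{\partial x^j\partial y^i} - 4 N^k_j g_{ki} = \frac{\partial^2 F^2}{\partial x^i\partial y^j}.
\]
Adding this to its image under $i\leftrightarrow j$ (using $g_{ij}=g_{ji}$) cancels the second-order mixed derivatives and yields $S(g_{ij}) = N^k_i g_{kj} + N^k_j g_{ik}$, i.e.\ \eqref{ngij}. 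Nothing beyond $C^3$-smoothness of $F^2$ on $T_0M$ enters here.

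For the converse, the key point I would exploit is that \eqref{ngij} is an affine equation in the spray coefficients $G^i$ and admits at most one solution in the class of sprays. Concretely, if two sprays $S,\bar S$ with coefficients $G^i,\bar G^i$ both satisfy \eqref{ngij} for the metric $g_{ij}$ of $F$, put $D^i=\bar G^i-G^i$, which is positively $2$-homogeneous. Subtracting the two copies of \eqref{ngij} gives $4 C_{ijk}D^k + g_{kj}\,\partial D^k/\partial y^i + g_{ik}\,\partial D^k/\partial y^j = 0$; lowering the index via $D_i=g_{ik}D^k$ and using $C_{ijk}y^k=0$ this collapses to $\partial D_i/\partial y^j + \partial D_j/\partial y^i = 0$. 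Differentiating once more, $\partial^2 D_j/\partial y^k\partial y^i$ is symmetric in $(k,i)$ and skew in $(i,j)$, hence vanishes, so each $D_j$ is affine in $y$; being positively $2$-homogeneous it must vanish, and non-degeneracy of $g$ forces $D^i=0$, that is $S=\bar S$. Since the geodesic spray $\bar S$ of $F$ satisfies \eqref{ngij} by the forward implication, any spray $S$ with $\nabla g_{ij}=0$ coincides with $\bar S$ and is therefore the geodesic spray of $F$.

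I expect the converse to be the real obstacle. A short computation shows that, granted \eqref{ngij}, the Euler--Lagrange expression $S(\partial F^2/\partial y^i)-\partial F^2/\partial x^i$ reduces to $-\bigl(\partial F^2/\partial x^i - N^k_i\,\partial F^2/\partial y^k\bigr)$, so \eqref{elf2} becomes equivalent to the horizontal condition $\delta_i(F^2)=0$, which is not automatic for an arbitrary spray. Bridging that gap is precisely what the homogeneity argument above does, and it is the exact analogue of the Koszul-formula uniqueness step in the Riemannian fundamental theorem; the only delicate point is the homogeneity bookkeeping ensuring that the Cartan-tensor terms cancel.
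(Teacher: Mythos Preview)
Your proof is correct. The forward implication is essentially the paper's argument: the paper packages it as an application of a general lemma on Euler--Lagrange forms, but the underlying computation---differentiate $\delta_S F^2=0$ in $y^j$ and symmetrize---is identical to yours.

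For the converse the two approaches genuinely diverge. The paper argues directly: from $\nabla g_{ij}=0$ it contracts with $y^j$ (using $\nabla y^j=0$) to get $\nabla(\partial F^2/\partial y^i)=0$, then with $y^i$ to get $S(F^2)=0$; differentiating the latter in $y^j$ and comparing with the former yields $\delta F^2/\delta x^j=0$, whence $\delta_S F^2=0$ via \eqref{elform}. Your route instead exploits uniqueness: the equation $\nabla g_{ij}=0$, read as an equation for the spray coefficients, has at most one $2$-homogeneous solution, and the geodesic spray of $F$ is one such solution by the forward direction. The paper's approach is self-contained and produces the intermediate identities $S(F^2)=0$ and $\delta_i F^2=0$ explicitly (these reappear in the projective and gyroscopic proofs); your approach is conceptually closer to the Riemannian fundamental theorem and makes the role of homogeneity, as a substitute for torsion-freeness, transparent. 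One small correction of exposition: the identity $C_{ijk}y^k=0$ is not what makes the Cartan terms drop out in your subtraction step. Rather, the term $4C_{ijk}D^k$ is exactly what the product rule needs to turn $g_{jk}\,\partial D^k/\partial y^i + g_{ik}\,\partial D^k/\partial y^j$ into $\partial D_j/\partial y^i + \partial D_i/\partial y^j$; nothing cancels, everything combines.
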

In the Riemannian case, the metric tensor \eqref{gij} is independent of the fibre coordinates, hence the equation \eqref{ngij} is linear in these fibre coordinates: $g_{ij\mid k}y^k=0$. Therefore, in this case, the metrizability conditions \eqref{ngij} reduce to the classic ones: $g_{ij\mid k}=0$. 

For the inverse problem of Lagrangian mechanics, the condition \eqref{ngij} is known as one of the Helmholtz conditions and it is only a necessary condition. We prove that for the Finsler metrizability problem, this Helmholtz condition is also sufficient. 

Regarding the projective metrizability problem, we can formulate it using the covariant derivative of the metric tensor of some Finsler function, similarly with the Levi-Civita equations from the Riemannian geometry.
\begin{thm} \label{thm:pm}
Consider $S$ a spray and a Finsler structure $\widetilde{F}$.
\begin{itemize}
\item[i)] The spray $S$ is projectively related to the Finsler structure $\widetilde{F}$ if and only if there exists a $1$-homogeneous function $P\in C^{\infty}(T_0M)$ such the metric tensor $\widetilde{g}_{ij}$ satisfies the following Levi-Civita equations:
\begin{eqnarray}
\nabla \widetilde{g}_{ij}= 2P \widetilde{g}_{ij} + \dfrac{\partial P}{\partial y^i} \widetilde{g}_{kj}y^k + \dfrac{\partial P}{\partial y^j} \widetilde{g}_{ik}y^k.
\label{npgij} 
\end{eqnarray} 
\item[ii)] If the spray $S$ is projectively related to the Finsler structure $\widetilde{F}$, then we have the following geodesic invariance: 
\begin{eqnarray}
\nabla\left(\frac{1}{\widetilde{F}}\left(\widetilde{g}_{ij} - \frac{\partial \widetilde{F}}{\partial y^i} \frac{\partial \widetilde{F}}{\partial y^j}\right)\right)=0. \label{nhij} 
\end{eqnarray}
\end{itemize}
\end{thm}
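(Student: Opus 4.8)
The plan is to reduce both parts, via Theorem \ref{thm:fm}, to tracking how the dynamical covariant derivative changes under a projective deformation of the spray. Write $\widetilde{S}$ for the geodesic spray of $\widetilde{F}$ and $\widetilde{\nabla}$ for its dynamical covariant derivative; Theorem \ref{thm:fm} gives $\widetilde{\nabla}\widetilde{g}_{ij}=0$. Here ``$S$ is projectively related to $\widetilde{F}$'' means $G^i=\widetilde{G}^i-Py^i$ for a $1$-homogeneous $P\in C^{\infty}(T_0M)$, equivalently $S=\widetilde{S}+2P\,\mathbb{C}$ with $\mathbb{C}=y^i\partial/\partial y^i$; at the level of nonlinear connections this is $N^i_j=\widetilde{N}^i_j-\bigl(P\,\delta^i_j+y^i\,\partial P/\partial y^j\bigr)$. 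I will repeatedly use the standard facts that $\nabla$ acts as $S$ on functions, satisfies Leibniz, kills $y^i$, and that $\mathbb{C}$ acts on a $k$-homogeneous object by multiplication by $k$ (Euler).

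For the direct implication in i), I would expand $\nabla\widetilde{g}_{ij}=S(\widetilde{g}_{ij})-N^k_i\widetilde{g}_{kj}-N^k_j\widetilde{g}_{ik}$: since $\widetilde{g}_{ij}$ is $0$-homogeneous the Liouville term in $S=\widetilde{S}+2P\mathbb{C}$ drops, so $S(\widetilde{g}_{ij})=\widetilde{S}(\widetilde{g}_{ij})$; substituting the connection deformation and using $\widetilde{\nabla}\widetilde{g}_{ij}=0$ together with $\widetilde{g}_{kj}y^k=\widetilde{F}\,\partial\widetilde{F}/\partial y^j$ collects exactly into \eqref{npgij}. For the converse, assume \eqref{npgij} for some $1$-homogeneous $P$ and set $\widehat{S}:=S-2P\mathbb{C}$, a spray with $\widehat{G}^i=G^i+Py^i$; running the same bookkeeping backwards shows its dynamical covariant derivative $\widehat{\nabla}$ satisfies $\widehat{\nabla}\widetilde{g}_{ij}=\nabla\widetilde{g}_{ij}-\bigl(2P\widetilde{g}_{ij}+(\partial P/\partial y^i)\widetilde{g}_{kj}y^k+(\partial P/\partial y^j)\widetilde{g}_{ik}y^k\bigr)=0$. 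By Theorem \ref{thm:fm} the spray $\widehat{S}$ must then be the geodesic spray of $\widetilde{F}$, i.e. $\widehat{S}=\widetilde{S}$, so $S=\widetilde{S}+2P\mathbb{C}$ is projectively related to $\widetilde{F}$.

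For ii), I would first record the purely algebraic identity $\frac{1}{\widetilde{F}}\bigl(\widetilde{g}_{ij}-\frac{\partial\widetilde{F}}{\partial y^i}\frac{\partial\widetilde{F}}{\partial y^j}\bigr)=\frac{\partial^2\widetilde{F}}{\partial y^i\partial y^j}$, immediate from $\widetilde{g}_{ij}=\tfrac12\,\partial^2\widetilde{F}^2/\partial y^i\partial y^j$. By part i) equation \eqref{npgij} holds; contracting it with $y^j$ and with $y^iy^j$ and applying Euler's theorem yields $\nabla\widetilde{F}=2P\widetilde{F}$ and $\nabla(\partial\widetilde{F}/\partial y^i)=P\,\partial\widetilde{F}/\partial y^i+\widetilde{F}\,\partial P/\partial y^i$. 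Feeding these into a Leibniz expansion of $\nabla\bigl(\widetilde{g}_{ij}-\partial_{y^i}\widetilde{F}\,\partial_{y^j}\widetilde{F}\bigr)$, the $\partial P/\partial y^i$- and $\partial P/\partial y^j$-terms cancel pairwise and one is left with $\nabla\bigl(\widetilde{g}_{ij}-\partial_{y^i}\widetilde{F}\,\partial_{y^j}\widetilde{F}\bigr)=2P\bigl(\widetilde{g}_{ij}-\partial_{y^i}\widetilde{F}\,\partial_{y^j}\widetilde{F}\bigr)$; since $\nabla(1/\widetilde{F})=-2P/\widetilde{F}$, one final Leibniz step gives \eqref{nhij}. (Alternatively one argues directly that $\widetilde{\nabla}\,\partial^2_{y^iy^j}\widetilde{F}=0$ because $\widetilde{\nabla}$ annihilates $\widetilde{g}_{ij}$, $\widetilde{F}$ and $y^i$, and then that the correction $\nabla-\widetilde{\nabla}$ applied to $\partial^2_{y^iy^j}\widetilde{F}$ vanishes: the Liouville contribution $-2P\,\partial^2_{y^iy^j}\widetilde{F}$, coming from $(-1)$-homogeneity, is cancelled by the connection-deformation contribution, which uses $y^k\,\partial^2_{y^ky^j}\widetilde{F}=0$.)

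The main obstacle is the careful bookkeeping of the two pieces of $\nabla-\widetilde{\nabla}$ under the projective change: the Liouville term $2P\mathbb{C}$, whose effect is governed by the homogeneity degree of whatever tensor it hits ($0$ for $\widetilde{g}_{ij}$ and $\partial_{y^i}\widetilde{F}$, $1$ for $\widetilde{F}$, $-1$ for $\partial^2_{y^iy^j}\widetilde{F}$), and the nonlinear-connection deformation; together with the repeated reliance on Euler's theorem to simplify the $y$-contractions. The other delicate point is the converse in i), where one must notice that $\widehat{\nabla}\widetilde{g}=0$ combined with Theorem \ref{thm:fm} identifies $\widehat{S}$ with the geodesic spray of the \emph{given} $\widetilde{F}$ (uniqueness of the geodesic spray of a Finsler metric), not merely with that of some unspecified Finsler structure.
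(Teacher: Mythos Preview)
Your proof is correct, but it follows a genuinely different route from the paper's. The paper does not compare $\nabla$ with $\widetilde{\nabla}$ at all; instead it works through a single key lemma (Lemma~\ref{lemma}) which says that an Euler--Lagrange-type equation $\delta_S f=\sigma$ is equivalent to the pair of conditions $\nabla(\partial^2_{y^iy^j}f)=\tfrac12(\partial_{y^j}\sigma_i+\partial_{y^i}\sigma_j)$ and a skew-symmetric companion. For part~i) the paper invokes the characterisation $\delta_S\widetilde{F}^2=2P\,d_J\widetilde{F}^2$ from \cite[Theorem~3.2]{BC20} and applies the lemma with $f=\widetilde{F}^2$, $\sigma=2P\,d_J\widetilde{F}^2$; the converse is obtained by contracting \eqref{npgij} with $y^j$ and $y^iy^j$, differentiating $S(\widetilde{F}^2)=4P\widetilde{F}^2$ in $y^i$, and reassembling $\delta_S\widetilde{F}^2=2P\,d_J\widetilde{F}^2$. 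For part~ii) the paper simply applies the same lemma to the Hamel equation $\delta_S\widetilde{F}=0$, which gives $\nabla(\partial^2_{y^iy^j}\widetilde{F})=0$ in one line.

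Your deformation approach has the advantage of being self-contained: it avoids the external reference \cite{BC20} and makes the projective-geometric content transparent, at the cost of importing Theorem~\ref{thm:fm} into the argument. The paper's approach, by contrast, treats Theorems~\ref{thm:fm}, \ref{thm:pm} and \ref{thm:gyroscopic} uniformly as three instances of Lemma~\ref{lemma} (with $\sigma=0$, $\sigma=2P\,d_J\widetilde{F}^2$, $\sigma=i_S\omega$ respectively), which is conceptually economical and keeps the three results logically independent of one another. Your derivation of ii) from i) via contractions is a pleasant bonus that the paper does not record.
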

In the Riemannian case, the projective function $P$ in linear, $P(x,y)=a_i(x)y^i$, the projective metrizability conditions \eqref{npgij} are linear in the fibre coordinates and therefore they reduce to the classic Levi-Civita equations, \cite[Theorem 5.2]{MVV08}:
\begin{eqnarray*}
 \widetilde{g}_{ij\mid k}= 2a_k \widetilde{g}_{ij} + a_i \widetilde{g}_{kj} + a_j \widetilde{g}_{ik}.
\end{eqnarray*}     
The geodesic invariant tensor from formula \eqref{nhij} can be expressed in terms of the angular metric, \cite[(9.2.20)]{SLK14}:
\begin{eqnarray}
\widetilde{h}_{ij}=\widetilde{g}_{ij} - \frac{\partial \widetilde{F}}{\partial y^i} \frac{\partial \widetilde{F}}{\partial y^j}=\widetilde{F}\frac{\partial^2 \widetilde{F}}{\partial y^i \partial y^j}. \label{angular}
\end{eqnarray}
Therefore, we can reformulate the geodesic invariance equation \eqref{nhij}, in terms of the angular metric \eqref{angular}, as follows:
\begin{eqnarray}
\nabla\left(\frac{1}{\widetilde{F}}\widetilde{h}_{ij}\right) = \nabla \left(\frac{\partial^2 \widetilde{F}}{\partial y^i \partial y^j}\right) = 0. \label{nhij2}
\end{eqnarray}  
The geodesic invariance condition \eqref{nhij}, or the equivalent formulations \eqref{nhij2}, are only necessary conditions for the projective metrizability of a given spray $S$. We consider now the most general class of sprays that can be characterized by the geodesic invariant condition \eqref{nhij}. 

A spray $S$ is called \emph{gyroscopic} if there exists a basic $2$-form $\omega\in \Lambda^2(M)$ and a Finsler structure $\widetilde{F}$ that satisfies the Euler-Lagrange equations, \cite{BC15}:
\begin{eqnarray}
S\left(\frac{\partial \widetilde{F}}{\partial y^i}\right) - \frac{\partial \widetilde{F}}{\partial x^i}=\omega_{ij}y^j. \label{isomega}
\end{eqnarray} 
 
In the next theorem we show that the class of gyroscopic sprays is the largest class of sprays that can be characterized by the geodesic invariance condition \eqref{nhij}.
\begin{thm} \label{thm:gyroscopic}
A spray $S$ is gyroscopic if and only if there exists a Finsler structure $\widetilde{F}$ that satisfies the geodesic invariance condition \eqref{nhij}.
\end{thm}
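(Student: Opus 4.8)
The plan is to reduce both implications to a single identity for the dynamical covariant derivative of the Hessian $\widetilde{f}_{ij}:=\partial^{2}\widetilde{F}/\partial y^{i}\partial y^{j}$. By \eqref{nhij2} together with \eqref{angular}, the geodesic invariance condition \eqref{nhij} is exactly $\nabla\widetilde{f}_{ij}=0$, which is the form I shall use. Write $\mathcal{E}_{i}(\widetilde{F}):=S\!\left(\partial\widetilde{F}/\partial y^{i}\right)-\partial\widetilde{F}/\partial x^{i}$ for the Euler--Lagrange expression of $\widetilde{F}$; then, by definition \eqref{isomega}, the statement ``$S$ is gyroscopic, with Finsler structure $\widetilde{F}$'' means precisely that $\mathcal{E}_{i}(\widetilde{F})=\omega_{ij}y^{j}$ for some basic, skew-symmetric $\omega_{ij}=\omega_{ij}(x)$.

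The only substantial step is the identity
\[
\nabla\widetilde{f}_{ij}=\tfrac{1}{2}\left(\frac{\partial}{\partial y^{i}}\mathcal{E}_{j}(\widetilde{F})+\frac{\partial}{\partial y^{j}}\mathcal{E}_{i}(\widetilde{F})\right),
\]
valid for an arbitrary Finsler structure $\widetilde{F}$; the same computation carried out with $\widetilde{F}^{2}$ in place of $\widetilde{F}$ is what underlies Theorem \ref{thm:fm}. To prove it I would compute the bracket $[S,\partial/\partial y^{i}]=-\partial/\partial x^{i}+2N^{a}_{i}\,\partial/\partial y^{a}$ in terms of the nonlinear connection $N^{a}_{i}$ of $S$, apply $S$ to $\widetilde{f}_{ij}=(\partial/\partial y^{i})(\partial\widetilde{F}/\partial y^{j})$, and use that $\partial/\partial x^{i}$ commutes with $\partial/\partial y^{j}$ on $\widetilde{F}$; this gives $S(\widetilde{f}_{ij})=(\partial/\partial y^{i})\mathcal{E}_{j}(\widetilde{F})+2N^{a}_{i}\widetilde{f}_{aj}$. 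Since the left-hand side is symmetric in $i,j$, averaging this with its transpose and cancelling against the connection terms in $\nabla\widetilde{f}_{ij}=S(\widetilde{f}_{ij})-N^{k}_{i}\widetilde{f}_{kj}-N^{k}_{j}\widetilde{f}_{ik}$ yields the displayed formula. I expect the sign bookkeeping in the bracket, together with the correct use of that commutativity, to be the only delicate point; everything afterwards is homogeneity and linear algebra.

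Granting the identity, the direct implication is immediate: if $S$ is gyroscopic then $\partial\mathcal{E}_{i}(\widetilde{F})/\partial y^{k}=\omega_{ik}$ (as $\omega$ is basic), so $\nabla\widetilde{f}_{ij}=\tfrac{1}{2}(\omega_{ji}+\omega_{ij})=0$ by skew-symmetry, i.e.\ \eqref{nhij} holds.

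For the converse, assume $\nabla\widetilde{f}_{ij}=0$ and set $\epsilon_{i}:=\mathcal{E}_{i}(\widetilde{F})$. Since a spray satisfies $[\mathbb{C},S]=S$ for the Liouville vector field $\mathbb{C}$, applying $S$ raises the degree of positive homogeneity by one, hence $\epsilon_{i}$ is positively $1$-homogeneous in $y$. The identity above forces $A_{ij}:=\partial\epsilon_{j}/\partial y^{i}$ to be skew-symmetric; on the other hand $\partial A_{ij}/\partial y^{k}=\partial^{2}\epsilon_{j}/\partial y^{k}\partial y^{i}$ is symmetric in the pair $(k,i)$, and a three-index quantity that is skew in one pair of indices and symmetric in an overlapping pair must vanish, so $A_{ij}$ does not depend on $y$, hence is basic. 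Euler's theorem now gives $\epsilon_{i}=y^{j}\,\partial\epsilon_{i}/\partial y^{j}=y^{j}A_{ji}$, so that, putting $\omega_{ij}:=A_{ji}$ --- a basic, skew-symmetric $2$-tensor, that is a $2$-form $\omega=\tfrac{1}{2}\omega_{ij}\,dx^{i}\wedge dx^{j}\in\Lambda^{2}(M)$ --- we obtain $\mathcal{E}_{i}(\widetilde{F})=\omega_{ij}y^{j}$, which is \eqref{isomega}. Therefore $S$ is gyroscopic, and the equivalence is established.
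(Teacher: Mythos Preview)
Your argument is correct and rests on the same identity as the paper's Lemma~\ref{lemma} (applied with $f=\widetilde{F}$): $\nabla\widetilde{f}_{ij}$ equals the symmetrized fibre derivative of the Euler--Lagrange expression. The forward implication is identical to the paper's. For the converse the paper differentiates $\nabla\widetilde{f}_{ij}=0$ in $y^{k}$, takes the skew part in $(j,k)$, and reads off that $\omega_{kj}:=\dfrac{\delta}{\delta x^{k}}\dfrac{\partial\widetilde{F}}{\partial y^{j}}-\dfrac{\delta}{\delta x^{j}}\dfrac{\partial\widetilde{F}}{\partial y^{k}}$ is basic, then invokes both equations of Lemma~\ref{lemma}; you instead work directly with $A_{ij}=\partial\epsilon_{j}/\partial y^{i}$, use the standard ``skew in one pair, symmetric in an overlapping pair $\Rightarrow$ zero'' trick to get $A_{ij}$ basic, and finish with Euler's theorem. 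This is the same idea packaged more cleanly, bypassing the horizontal-derivative bookkeeping.

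One small correction: your intermediate claim $S(\widetilde{f}_{ij})=\partial_{y^{i}}\mathcal{E}_{j}(\widetilde{F})+2N^{a}_{i}\widetilde{f}_{aj}$ is off by the term $\dfrac{\partial^{2}\widetilde{F}}{\partial x^{j}\partial y^{i}}-\dfrac{\partial^{2}\widetilde{F}}{\partial x^{i}\partial y^{j}}$ (the commutativity of $\partial/\partial x$ and $\partial/\partial y$ does not make this vanish, since the indices are different). However, this extra term is skew in $(i,j)$, so it disappears under the symmetrization you perform immediately afterwards, and your displayed identity $\nabla\widetilde{f}_{ij}=\tfrac{1}{2}(\partial_{y^{i}}\mathcal{E}_{j}+\partial_{y^{j}}\mathcal{E}_{i})$ and everything that follows remain valid.
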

  
Consider $S$ the geodesic spray of some Finsler function $F$ that is also gyroscopic (or it is projectively related) to some Finsler metric $\widetilde{F}$. Using the geodesic invariances \eqref{ngij} and \eqref{nhij2} it follows that the $(1,1)$-type tensor 
\begin{eqnarray}
\mathcal{H}^i_j=\dfrac{F}{\widetilde{F}}g^{ik}\widetilde{h}_{kj} = \frac{F}{\widetilde{F}^3}g^{ik}\left(\widetilde{g}_{kj}\widetilde{g}_{ls} - \widetilde{g}_{kl}\widetilde{g}_{js}\right)y^ly^s. \label{hij}
\end{eqnarray}
 is geodesically invariant. Consequently, all scalar functions (traces, eigenvalues, coefficients of the characteristic polynomial) determined by the $(1,1)$-type tensor \eqref{hij} are geodesically invariant and hence they provide first integrals for the geodesic spray $S$, \cite{Bucataru22}.  

\section{Finsler metrics and induced geometric structures.}

On an $n$-dimensional manifold $M$, we consider $F:TM \to \mathbb{R}$ a Finsler structure with the metric tensor $g_{ij}$, given by formula \eqref{gij}. The homogeneity condition for the Finsler function $F$ implies, using Euler's theorem, that: 
\begin{eqnarray*}
\frac{\partial F}{\partial y^i}y^i = F, \quad \frac{\partial F^{2}}{\partial y^{i}} = 2g_{ij}(x,y)y^{j}, \quad F^{2}(x,y)=g_{ij}(x,y)y^{i}y^{j}.
\end{eqnarray*}
Together with the metric tensor $g_{ij}$, we consider also \emph{angular metric}:
\begin{eqnarray*}
h_{ij}=F\frac{\partial^{2}F}{\partial y^{i}\partial y^{j}}=g_{ij}-\frac{\partial F}{\partial y^{i}}\frac{\partial F}{\partial y^{j}}
\end{eqnarray*}
The homogeneity of the Finsler function $F$ assures that $h_{ij}y^{j}=0$. Therefore, the regularity condition for the metric tensor $g_{ij}$, $\operatorname{rank}(g_{ij})=n$, is equivalent to $\operatorname{rank}(h_{ij})=n-1$. 

A \emph{spray} $S$ is a globally defined vector field on $T_{0}M$, $S\in\mathfrak{X}(T_{0}M)$, that satisfies:
\begin{itemize}
\item[i)] $JS=\mathbb{\mathcal{C}}$ (it is a second order vector
field) and
\item[ii)] $[\mathbb{\mathcal{C}},S]=S$ (it is homogeneous of order $2$).
\end{itemize} 
Here $\mathcal{C}=y^{i}\frac{\partial}{\partial y^{i}}$ is the Liouville vector field on $T_{0}M$ and $J=\frac{\partial}{\partial y^{i}}\otimes dx^{i}$ is the tangent
structure (or vertical endomorphism). Locally, a spray can be expressed as follows: 
\begin{eqnarray}
S=y^{i}\frac{\partial}{\partial x^{i}}-2G^{i}(x,y)\frac{\partial}{\partial y^{i}},\label{semispray}
\end{eqnarray}
where $G^{i}$ are locally defined, homogeneous functions of order $2$ on $T_{0}M$. 

A regular curve $\gamma:I\to M$ is a geodesic
of $S$ if $S\circ\gamma'=\gamma''$. Locally, a curve
$\gamma(t)=(x^{i}(t))$ is a geodesic of the spray $S$, given by formula \eqref{semispray}, if it satisfies
the system of second order ordinary differential equations 
\begin{eqnarray}
\frac{d^{2}x^{i}}{dt^{2}}+2G^{i}\left(x,\frac{dx}{dt}\right)=0.\label{sode}
\end{eqnarray}
A spray $S$ induces a nonlinear connection on $T_{0}M$, and hence a horizontal distribution $HT_0M$ that is supplementary to the vertical distribution $VT_0M=\operatorname{Ker}J$.  The corresponding
horizontal and vertical projectors, $h$ and $v$, are locally given
by, \cite{Grifone72}, 
\begin{eqnarray*}
h=\frac{\delta}{\delta x^{i}}\otimes dx^{i},\quad v=\frac{\partial}{\partial y^{i}}\otimes\delta y^{i},\quad\frac{\delta}{\delta x^{i}}=\frac{\partial}{\partial x^{i}}-N_{i}^{j}\frac{\partial}{\partial y^{j}},\quad\delta y^{i}=dy^{i}+N_{j}^{i}dx^{j},\quad N_{j}^{i}=\frac{\partial G^{i}}{\partial y^{j}}.
\end{eqnarray*}
A spray $S$ determines also a tensor derivation on $T_{0}M$, called the \emph{dynamical covariant derivative}. Its
action on smooth functions and vector fields is given by, \cite{BD09},
\begin{eqnarray*}
\nabla f=S(f), \quad \forall f\in C^{\infty}(T_{0}M),\quad \nabla X=h\left[S,hX\right]+v\left[S,vX\right],\quad \forall X\in\mathfrak{X}(T_{0}M).
\end{eqnarray*} 
Using the Leibniz rule, and the requirement that $\nabla$ commutes
with tensor contraction, we can extend the action of dynamical covariant derivative to arbitrary tensor fields and (vector valued) forms on $T_{0}M$. The dynamical covariant derivative $\nabla$ preserves the horizontal and vertical distributions $HT_0M$ and $VT_0M$.

For the components of a $(0,2)$-type tensor field $g_{ij}$ and a $1$-form $\omega_i$ on $T_{0}M$, their dynamical covariant derivatives are given by: 
\begin{eqnarray*}
\nabla g_{ij}=S(g_{ij}) - N_i^k g_{kj} - N_j^k g_{ik}, \quad \nabla \omega_i = S(\omega_i) - N_i^k\omega_k.
\end{eqnarray*}

For an arbitrary spray $S$ and an arbitrary smooth function $L$
on $T_0M$, the following $1$-form is a semi-basic $1$-form:
\begin{eqnarray}
\delta_{S}L & =\left\{ S\left(\dfrac{\partial L}{\partial y^{i}}\right)-\dfrac{\partial L}{\partial x^{i}}\right\} dx^{i} & =\left\{ \nabla\left(\frac{\partial L}{\partial y^{i}}\right)-\frac{\delta L}{\delta x^{i}}\right\} dx^{i}.\label{elform}
\end{eqnarray}
It is called the \emph{Euler-Lagrange} $1$-form, or \emph{the Lagrange differential}.

The regularity condition of a Finsler metric
implies that there exists a unique spray $S$ on $T_{0}M$ satisfying
the Euler Lagrange equations \eqref{elf2}, which can be written also using the Euler-Lagrange $1$-form as follows:
\begin{eqnarray*}
\delta_{S}F^{2}=0.
\end{eqnarray*}
This spray is called the \emph{geodesic spray} of the Finsler metric, and
its local coefficients are given by
\begin{eqnarray*}
G^{i}=\frac{1}{4}g^{ik}\left(\frac{\partial^{2}F^{2}}{\partial y^{k}\partial x^{h}}y^{h}-\frac{\partial F^{2}}{\partial x^{k}}\right).
\end{eqnarray*}

Two sprays $S$ and $\widetilde{S}$ are \emph{projectively
related} if their geodesics coincide up to an orientation preserving
reparametrization. A spray $S$ is projectively related to a Finsler structure $\widetilde{F}$ if it is projectively related to its geodesic spray.

Two sprays $S$ and $\widetilde{S}$ are  projectively related if and only if there exists a smooth function
$P$ on $T_{0}M$, positively homogeneous of order $1$, such that
$\widetilde{S}=S-2P\mathcal{C}$. $P$ is called the deformation (projective) factor. 

A Finsler metric $\widetilde{F}$ is projectively related to a spray
$S$ if and only if it satisfies the \emph{Hamel equation}: 
\begin{eqnarray}
\delta_{S}\widetilde{F}=0.\label{eq:Hamel}
\end{eqnarray}
In this case, the geodesic spray $\widetilde{S}$ of $\widetilde{F}$ and
$S$ are related by the projective factor $2P\widetilde{F}=S(\widetilde{F})$. 

Gyroscopic sprays are given by equation \eqref{isomega} that is equivalent to $\delta_S\widetilde{F}=i_S\omega$. This equation generalizes the Hamel equation \eqref{eq:Hamel}. 

\section{Proof of the main theorems.}

In Finsler geometry, it is important to decide whether or not, the integral curves of a system \eqref{sode} are related to the geodesics of a Finsler metric $F$, eventually up to an orientation preserving reparameterisation, and/or under the action of some gyroscopic force. These situations correspond to the following metrizability problems, when a given spray $S$ is:
\begin{itemize}
\item[FM)] the geodesic spray of some Finsler metric;
\item[PM)] projectively related to some Finsler metric;
\item[GM)] the gyroscopic spray of some Finsler metric.
\end{itemize} 
There are various characterisations for these metrisability problems, the corresponding integrability conditions being known as the Helmholtz conditions, \cite{BC15, BD09, KS85, SV02}. The main results of this work, Theorems \ref{thm:fm}, \ref{thm:pm} and \ref{thm:gyroscopic} provide characterisations of the three metrizability problems using the dynamical covariant derivative of the metric tensor \eqref{gij} and the angular tensor \eqref{angular}.

\begin{lem} \label{lemma} Consider $f\in\mathcal{C}^{\infty}(T_{0}M)$
a smooth function and $\sigma=\sigma_{i}(x,y)dx^{i}$ a semi-basic
$1$-form, both homogeneous of order $k\in\mathbb{Z}^{*}$. Then, $f$ satisfies the Euler-Lagrange-type equation $\delta_{S}f=\sigma$ if an only if it satisfies the following two equations:
\begin{eqnarray}
\begin{cases}
& \nabla\left(\dfrac{\partial^{2}f}{\partial y^{i}\partial y^{j}}\right) =\dfrac{1}{2}\left(\dfrac{\partial\sigma_{i}}{\partial y^{j}}+\dfrac{\partial\sigma_{j}}{\partial y^{i}}\right),\vspace{2mm} \\
& \dfrac{\delta}{\delta x^{i}}\left(\dfrac{\partial f}{\partial y^{j}}\right)-\dfrac{\delta}{\delta x^{j}}\left(\dfrac{\partial f}{\partial y^{i}}\right)  =\dfrac{1}{2}\left(\dfrac{\partial\sigma_{i}}{\partial y^{j}}-\dfrac{\partial\sigma_{j}}{\partial y^{i}}\right).
\end{cases}\label{dsfs}
\end{eqnarray}
\end{lem}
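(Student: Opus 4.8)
The plan is to reduce both implications to one pointwise identity, valid for every smooth $f$ on $T_{0}M$:
\begin{equation*}
\frac{\partial}{\partial y^{j}}\left(S\left(\frac{\partial f}{\partial y^{i}}\right)-\frac{\partial f}{\partial x^{i}}\right)=\nabla\left(\frac{\partial^{2}f}{\partial y^{i}\partial y^{j}}\right)+\frac{\delta}{\delta x^{j}}\left(\frac{\partial f}{\partial y^{i}}\right)-\frac{\delta}{\delta x^{i}}\left(\frac{\partial f}{\partial y^{j}}\right).
\end{equation*}
To establish it I would first compute the commutator $[\partial/\partial y^{j},S]=\partial/\partial x^{j}-2N_{j}^{k}\,\partial/\partial y^{k}$, which is immediate from the local form \eqref{semispray} of $S$ and the relation $N_{j}^{k}=\partial G^{k}/\partial y^{j}$. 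Applying this commutator to $\partial f/\partial y^{i}$ rewrites $\partial_{y^{j}}\bigl(S(\partial f/\partial y^{i})\bigr)$ as $S(\partial^{2}f/\partial y^{i}\partial y^{j})$ plus terms in $N_{i}^{k}$, $N_{j}^{k}$ and the mixed second derivatives $\partial^{2}f/\partial x^{i}\partial y^{j}$ and $\partial^{2}f/\partial x^{j}\partial y^{i}$. One then recognises $\nabla(\partial^{2}f/\partial y^{i}\partial y^{j})=S(\partial^{2}f/\partial y^{i}\partial y^{j})-N_{i}^{k}\,\partial^{2}f/\partial y^{k}\partial y^{j}-N_{j}^{k}\,\partial^{2}f/\partial y^{i}\partial y^{k}$ and collects the leftover terms, via $\delta/\delta x^{i}=\partial/\partial x^{i}-N_{i}^{k}\,\partial/\partial y^{k}$, into the bracket of $\delta/\delta x$-derivatives appearing in the display. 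The key structural point is that $\nabla(\partial^{2}f/\partial y^{i}\partial y^{j})$ is symmetric in $i,j$ whereas that bracket is skew-symmetric, so the right-hand side of the identity is already decomposed into its symmetric and its antisymmetric part in $(i,j)$.

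With the identity in hand, the direct implication is immediate: assuming $\delta_{S}f=\sigma$, i.e.\ $S(\partial f/\partial y^{i})-\partial f/\partial x^{i}=\sigma_{i}$, I differentiate with respect to $y^{j}$, substitute the identity, and read off the symmetric and the skew-symmetric part in $(i,j)$; this produces precisely the two equations \eqref{dsfs}. Homogeneity plays no role at this stage.

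For the converse, adding the two equations \eqref{dsfs} — the first symmetric and the second skew in $i,j$ — equates $\partial\sigma_{i}/\partial y^{j}$ with the right-hand side of the identity, hence with $\partial_{y^{j}}\bigl(S(\partial f/\partial y^{i})-\partial f/\partial x^{i}\bigr)$; therefore $\partial/\partial y^{j}\bigl(S(\partial f/\partial y^{i})-\partial f/\partial x^{i}-\sigma_{i}\bigr)=0$ for all $i,j$. Now $\sigma_{i}$ is positively $k$-homogeneous by hypothesis, and so is $S(\partial f/\partial y^{i})-\partial f/\partial x^{i}$: since $f$ is $k$-homogeneous, $\partial f/\partial y^{i}$ is $(k-1)$-homogeneous, $S$ raises the fibre degree by one, and $\partial f/\partial x^{i}$ preserves it. Consequently the difference $S(\partial f/\partial y^{i})-\partial f/\partial x^{i}-\sigma_{i}$ is $k$-homogeneous with vanishing vertical derivative, so Euler's theorem forces it to be $0$ (its product with $k$ equals $y^{j}$ times its $\partial/\partial y^{j}$-derivative, which vanishes), and since $k\neq 0$ we conclude $\delta_{S}f=\sigma$. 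This concluding step is the only part that is not a direct computation, and it is exactly where the hypothesis $k\in\mathbb{Z}^{*}$ is needed: when $k=0$ a component depending on $x$ alone has vanishing vertical derivative without being zero, so the equivalence genuinely breaks down. The real work therefore lies in the identity of the first paragraph — a routine though slightly lengthy calculation — and I do not expect an obstacle beyond carrying it out carefully.
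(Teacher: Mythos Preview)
Your proposal is correct and follows essentially the same route as the paper: both establish the identity
\[
\frac{\partial}{\partial y^{j}}\left(S\left(\frac{\partial f}{\partial y^{i}}\right)-\frac{\partial f}{\partial x^{i}}\right)=\nabla\left(\frac{\partial^{2}f}{\partial y^{i}\partial y^{j}}\right)+\frac{\delta}{\delta x^{j}}\left(\frac{\partial f}{\partial y^{i}}\right)-\frac{\delta}{\delta x^{i}}\left(\frac{\partial f}{\partial y^{j}}\right)
\]
via the commutator $[\partial/\partial y^{j},S]$, split into symmetric and skew-symmetric parts, and invoke Euler's theorem on the $k$-homogeneous components to obtain the converse. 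Your write-up is in fact slightly more explicit than the paper's about why $k\neq 0$ is essential and about the homogeneity bookkeeping for $S(\partial f/\partial y^{i})-\partial f/\partial x^{i}$.
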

\begin{proof} According to Euler's theorem, a $k$-homogeneous function, $k\in\mathbb{Z}^{*}$, is well determined by its fibre derivatives: $kf=y^i{\partial f}/{\partial y^i}$. For the Euler-Lagrange semi-basic $1$-form $\delta_Sf$, we denote its components $\omega_i= S\left({\partial f}/{\partial y^{i}}\right)-{\partial f}/{\partial x^{i}}$. Therefore, we have the Euler-Lagrange-type equations $\delta_{S}f=\sigma$ if and only if their $k$-homogeneous components are equal, which is equivalent to: ${\partial\omega_{i}}/{\partial y^{j}}={\partial\sigma_{i}}/{\partial y^{j}}$, $\forall i,j\in\overline{1,n}$. Using the commutation rules for ${\partial}/{\partial y^j}$ and $S$ we have:
\begin{align*}
& \frac{\partial\omega_{i}}{\partial y^{j}} = S\left(\dfrac{\partial^{2}f}{\partial y^{j}\partial y^{i}}\right) + \dfrac{\partial^2 f}{\partial x^j\partial y^i}- 2N^k_j \frac{\partial^2 f}{\partial y^k \partial y^i} - \dfrac{\partial^2 f}{\partial y^j \partial x^i} \\
 & =\nabla\left(\frac{\partial^{2}f}{\partial y^{j}\partial y^{i}}\right)+\frac{\delta}{\delta x^{j}}\left(\frac{\partial f}{\partial y^{i}}\right)-\frac{\delta}{\delta x^{i}}\left(\frac{\partial f}{\partial y^{j}}\right).
\end{align*}
Hence ${\partial\omega_{i}}/{\partial y^{j}}={\partial\sigma_{i}}/{\partial y^{j}}$ if and only if the symmetric part and the skew symmetric part
of the right and left sides of these partial derivatives coincide. This is equivalent with the two conditions \eqref{dsfs}. 
\end{proof}

\subsection*{Proof of Theorem \ref{thm:fm} }

A given spray $S$ is the geodesic spray of some Finsler structure
$F$ if it satisfies the Euler-Lagrange equation $\delta_{S}F^{2}=0$. We use Lemma \eqref{lemma} for $f=F^2$ and $\sigma=0$, both homogeneous of order $2$. Then, the first equation \eqref{dsfs} reads
$\nabla g_{ij} = 0$. 
Therefore, the condition \eqref{ngij}, $\nabla g_{ij}=0$, is a necessary condition for a spray to be Finsler metrizable. We will prove now that this condition is also sufficient.

We assume that the condition \eqref{ngij} is true. Since the dynamical covariant derivative $\nabla$ commutes with the contractions and satisfies the Leibniz rule we have the following implications:
\begin{eqnarray*}
\nabla g_{ij}=0\Rightarrow \nabla(g_{ij}y^j)=0 \Rightarrow \nabla\left(\dfrac{\partial F^2}{\partial y^i}\right)=0 \Rightarrow \nabla\left(\dfrac{\partial F^2}{\partial y^i}y^i\right)=0 \Rightarrow \nabla\left(F^2\right)=0 \Rightarrow S\left( F^2 \right)=0.
\end{eqnarray*}
If we differentiate with respect to $y^{j}$ the last relation and use the commutation rule for $S$ and $\partial/\partial y^j$ we obtain:
\begin{eqnarray*}
S\left(\dfrac{\partial F^2}{\partial y^j}\right) + \dfrac{\partial F^2}{\partial x^j} - 2N^k_j \dfrac{\partial F^2}{\partial y^k}=0 \Rightarrow \nabla\left(\dfrac{\partial F^2}{\partial y^j}\right) + \dfrac{\delta F^2}{\delta x^j}=0 \Rightarrow \dfrac{\delta F^2}{\delta x^j}=0. 
\end{eqnarray*}
Using these computations and the second expression for the Euler-Lagrange $1$-form \eqref{elform}, we have
\begin{eqnarray*}
\delta_SF^2=\left\{\nabla\left(\dfrac{\partial F^2}{\partial y^i}\right) - \dfrac{\delta F^2}{\delta x^i}\right\}dx^i=0,
\end{eqnarray*}
and hence $S$ is the geodesic spray of the Finsler metric $F$.

\subsection*{Proof of Theorem \ref{thm:pm}}

For the first part of the Theorem \ref{thm:pm}, consider $S$ a spray and a Finsler metric $\widetilde{F}$. The spray $S$ is projectively related to $\widetilde{F}$ if and only if there exists a $1$-homogeneous function $P\in C^{\infty}(T_0M)$ such that $\delta_S\widetilde{F}^2=2Pd_J\widetilde{F}^2$, \cite[Theorem 3.2]{BC20}. 

We assume that the spray $S$ is projectively related to the Finsler metric $\widetilde{F}$ and we prove that the dynamical covariant derivative of the metric tensor $\widetilde{g}$ satisfies the Levi-Civita equations \eqref{npgij}. We use again Lemma \ref{lemma} for $f=F^2$ and $\sigma=2Pd_JF^2$, both homogeneous of order $2$.

First, we compute the right hand side of the first equation \eqref{dsfs} for $\sigma_i=2P{\partial \widetilde{F}^2}/{\partial y^i}$:
\begin{eqnarray*}
\dfrac{\partial \sigma_i}{\partial y^j}=2\dfrac{\partial \widetilde{F}^2}{\partial y^i}\dfrac{\partial P}{\partial y^j} + 4P \widetilde{g}_{ij}.
\end{eqnarray*}
It follows from the first formula \eqref{dsfs} that we have:
\begin{eqnarray*}
\nabla\widetilde{g}_{ij}=2P\widetilde{g}_{ij}+\dfrac{\partial P}{\partial y^{i}}\widetilde{g}_{kj}y^{k}+\dfrac{\partial P}{\partial y^{j}}\widetilde{g}_{ik}y^{k}
\end{eqnarray*}
and hence the metric tensor $\widetilde{g}_{ij}$ satisfy the Levi-Civita equations \eqref{npgij}.

For the converse implication, we consider a spray $S$ and a Finsler structure $\widetilde{F}$, whose metric tensor $\widetilde{g}_{ij}$ satisfies the Levi-Civita equations \eqref{npgij}. If we contract both sides of this equation by $y^j$ and use the homogeneity of the involved quantities, we obtain:
\begin{eqnarray}
\nabla\left(\dfrac{\partial \widetilde{F}^2}{\partial y^i}\right) = 3P \dfrac{\partial \widetilde{F}^2}{\partial y^i} + 2 \widetilde{F}^2 \dfrac{\partial P}{\partial y^i}. \label{npfy}
\end{eqnarray}
If we contract again both sides of this equation by $y^i$, we obtain 
$\nabla (\widetilde{F}^2)=4P\widetilde{F}^2$ and hence $S(\widetilde{F}^2)=4P\widetilde{F}^2$. If we take the derivatives of both sides of the last equation with respect to $y^i$ and use the commutation rule for $S$ and ${\partial}/{\partial y^i}$ we obtain 
\begin{eqnarray*}
S\left(\dfrac{\partial \widetilde{F}^2}{\partial y^i}\right) + \dfrac{\partial \widetilde{F}^2}{\partial x^i} - 2N^k_i \dfrac{\partial \widetilde{F}^2}{\partial y^k} = 4P \dfrac{\partial \widetilde{F}^2}{\partial y^i} + 4 \widetilde{F}^2 \dfrac{\partial P}{\partial y^i},
\end{eqnarray*}    
which can be written as follows
\begin{eqnarray*}
\nabla\left(\dfrac{\partial \widetilde{F}^2}{\partial y^i}\right) + \dfrac{\delta \widetilde{F}^2}{\delta x^i}  = 4P \dfrac{\partial \widetilde{F}^2}{\partial y^i} + 4 \widetilde{F}^2 \dfrac{\partial P}{\partial y^i}.
\end{eqnarray*} 
From this formula, if we use \eqref{npfy}, we obtain
\begin{eqnarray}
\dfrac{\delta \widetilde{F}^2}{\delta x^i}  = P \dfrac{\partial \widetilde{F}^2}{\partial y^i} +  \widetilde{F}^2 \dfrac{\partial P}{\partial y^i}. \label{dpfy}
\end{eqnarray}
If we subtract the two formulae \eqref{npfy} and \eqref{dpfy}, side by side, we obtain 
\begin{eqnarray*}
\nabla\left(\dfrac{\partial \widetilde{F}^2}{\partial y^i}\right) - \dfrac{\delta \widetilde{F}^2}{\delta x^i}  = 2P \dfrac{\partial \widetilde{F}^2}{\partial y^i}, 
\end{eqnarray*}  
which means $\delta_S\widetilde{F}^2=2Pd_J\widetilde{F}^2$ and hence, according to \cite[Theorem 3.2]{BC20}, the spray $S$ is projectively related to the Finsler metric $\widetilde{F}$.

The second part of Theorem \ref{thm:pm} is a direct consequence of Lemma \ref{lemma}, for $f=F$ and $\sigma=0$.

A spray $S$ is projectively related to a Finsler structure $\widetilde{F}$ if and only if $\delta_{S}\tilde{F}=0$. Then, according to the first formula \eqref{dsfs}, we obtain that 
\begin{eqnarray*}
\nabla\left(\frac{\partial^{2}\widetilde{F}}{\partial y^{i}\partial y^{j}}\right) =0.
\end{eqnarray*}
Hence, the geodesic invariance $\nabla\left(\widetilde{h}_{ij}/\widetilde{F}\right)=0$ is
a necessary condition for $S$ to be projectively related to $\widetilde{F}$, where $\widetilde{h}_{ij}$ 
is the angular metric \eqref{angular} of $\widetilde{F}$.

\subsection*{Proof of Theorem \ref{thm:gyroscopic}}

Theorem \ref{thm:gyroscopic} states that the most general class of sprays that satisfy the geodesic invariance condition \eqref{nhij2} is the class of gyroscopic sprays.

Consider $S$ a spray, a Finsler metric $\widetilde{F}$ and a basic $2$-form $\omega\in \Lambda^2(M)$. According to Lemma \ref{lemma} for $f=F$ and $\sigma=i_S\omega$, homogeneous of order $1$, we have that the spray $S$ is gyroscopic if and only if the following two equations are satisfied:
\begin{eqnarray}
\begin{cases}
\nabla\left(\dfrac{\partial^{2}\widetilde{F}}{\partial y^{i}\partial y^{j}}\right) & =0,\\
\dfrac{\delta}{\delta x^{i}}\left(\dfrac{\partial \widetilde{F}}{\partial y^{j}}\right)-\dfrac{\delta}{\delta x^{j}}\left(\dfrac{\partial\widetilde{F}}{\partial y^{i}}\right) & =\omega_{ij}(x).
\end{cases} \label{cgyro}
\end{eqnarray}
Therefore, the first equation \eqref{cgyro} is a necessary condition for a spray $S$ to be gyroscopic.

We will prove now that this condition is sufficient as well. For this, we will show that the first condition \eqref{cgyro} implies the second condition \eqref{cgyro}. We start with the first condition \eqref{cgyro}, which can be written as follows:
\begin{eqnarray*}
 S\left(\frac{\partial^{2}\widetilde{F}}{\partial y^{i}\partial y^{j}}\right)-N_{i}^{l}\frac{\partial^{2}\widetilde{F}}{\partial y^{l}\partial y^{j}}-N_{j}^{l}\frac{\partial^{2}\widetilde{F}}{\partial y^{i}\partial y^{l}}=0.
\end{eqnarray*}
We differentiate with respect to $y^{k}$ and use again the commutation rule for $S$ and ${\partial}/{\partial y^k}$:
\begin{eqnarray*}
& & S\left(\dfrac{\partial^{3}\widetilde{F}}{\partial y^{i}\partial y^{j}\partial y^k}\right) + \dfrac{\partial}{\partial x^k} \left(\dfrac{\partial^2 \widetilde{F}}{\partial y^i \partial y^j} \right) - 2N_{k}^{l}\dfrac{\partial^3 \widetilde{F}}{\partial y^i \partial y^j \partial y^l}  \\ & - & \dfrac{\partial^2 G^l}{\partial y^i \partial y^k} \dfrac{\partial^2 \widetilde{F}}{\partial y^j \partial y^l} - N_{i}^{l}\dfrac{\partial^3 \widetilde{F}}{\partial y^j \partial y^k \partial y^l} - \dfrac{\partial^2 G^l}{\partial y^j \partial y^k} \dfrac{\partial^2 \widetilde{F}}{\partial y^i \partial y^l} - N_{j}^{l}\dfrac{\partial^3 \widetilde{F}}{\partial y^i \partial y^k \partial y^l} = 0. 
\end{eqnarray*}
The left hand side of the above equation is a $(0,3)$-type tensor. Of this tensor, we will consider only its skew-symmetric part with respect to $j$ and $k$, and hence we obtain the following equation:
\begin{eqnarray*}
& \dfrac{\partial}{\partial y^i}\left(\dfrac{\partial}{\partial x^k}\left(\dfrac{\partial \widetilde{F}}{\partial y^j}\right) - N^l_k \dfrac{\partial^2 \widetilde{F}}{\partial y^j\partial y^l} - \dfrac{\partial}{\partial x^j}\left(\dfrac{\partial \widetilde{F}}{\partial y^k}\right) + N^l_j \dfrac{\partial^2 \widetilde{F}}{\partial y^k\partial y^l} \right)= \\
& \dfrac{\partial}{\partial y^i}\left(\dfrac{\delta}{\delta x^k}\left(\dfrac{\partial \widetilde{F}}{\partial y^j}\right)  - \dfrac{\delta}{\delta x^j}\left(\dfrac{\partial \widetilde{F}}{\partial y^k}\right) \right)=\dfrac{\partial \omega_{kj}}{\partial y^i} = 0. 
\end{eqnarray*}
It follows that 
\begin{eqnarray*}
\omega_{kj}=\dfrac{\delta}{\delta x^k}\left(\dfrac{\partial \widetilde{F}}{\partial y^j}\right)  - \dfrac{\delta}{\delta x^j}\left(\dfrac{\partial \widetilde{F}}{\partial y^k}\right)
\end{eqnarray*}
are the components of a basic $2$-form, $\omega=\omega_{kj}(x)dx^k\wedge dx^j$. Therefore, the second equation  \eqref{cgyro} is satisfied. According to Lemma \ref{lemma}, the two equations \eqref{cgyro} are equivalent to the fact that the spray $S$ satisfies the gyroscopic equation $\delta_S\widetilde{F}= i_S\omega$.

\subsection*{Acknowledgements} 
This manuscript has no associated data.

\end{document}